\newtheorem{theorem}{Theorem}
\newtheorem{corollary}[theorem]{Corollary}
\newtheorem{proposition}[theorem]{Proposition}
\newcommand{\des}{{\rm des\,}}
\newcommand{\msn}{\mathfrak{S}_n}
\newcommand{\lrf}[1]{\lfloor #1\rfloor}
\newcommand{\Eulerian}[2]{\genfrac{<}{>}{0pt}{}{#1}{#2}}
\newcommand{\arxiv}[1]{\href{http://arxiv.org/abs/#1}{\texttt{arXiv:#1}}}
\title{On $\gamma$-vectors and the derivatives of the tangent and secant functions}
\author[S.-M.~Ma]{Shi-Mei~Ma}
\address{School of Mathematics and Statistics,
         Northeastern University at Qinhuangdao,
         Hebei 066004, P. R. China}
\email{shimeima@yahoo.com.cn (S.-M. Ma)}
\begin{document}

\maketitle

\begin{abstract}
In this paper we consider the $\gamma$-vectors of the types $A$ and $B$ Coxeter complexes as well as
the $\gamma$-vectors of the types $A$ and $B$ associahedrons. We show that these $\gamma$-vectors can be obtained by using derivative polynomials
of the tangent and secant functions. A grammatical description for these $\gamma$-vectors is discussed. Moreover, we also present a grammatical description for
the well known Legendre polynomials and Chebyshev polynomials of both kinds.
\bigskip\\
{\sl Keywords:} $\gamma$-vectors; Tangent function; Secant function; Eulerian polynomials
\smallskip\\
{\sl 2010 Mathematics Subject Classification:} 05A05; 05A15
\end{abstract}
\section{Introduction}
Let $\msn$ denote the symmetric group of all permutations of $[n]$, where $[n]=\{1,2,\ldots,n\}$.
The {\it hyperoctahedral group} $B_n$ is the group of signed permutations of the set $\pm[n]$ such that $\pi(-i)=-\pi(i)$ for all $i$, where $\pm[n]=\{\pm1,\pm2,\ldots,\pm n\}$.
A permutation $\pi=\pi(1)\pi(2)\cdots\pi(n)\in\msn$
is {\it alternating} if $\pi(1)>\pi(2)<\pi(3)>\cdots \pi(n)$.
Similarly, an element $\pi$ of $B_n$ is alternating if
$\pi(1)>\pi(2)<\pi(3)>\cdots\pi(n)$.
Denote by $E_n$ and $E_n^B$ the number of alternating elements in $\msn$ and $B_n$, respectively.
It is well known (see~\cite{Chow06,Sta10}) that
$$\sum_{n=0}^\infty E_n\frac{x^n}{n!}=\tan x+\sec x,\quad
\sum_{n=0}^\infty E_n^B\frac{x^n}{n!}=\tan 2x+\sec 2x.$$

Derivative polynomials are an important part of combinatorial trigonometry (see~\cite{Boyadzhiev07,Franssens07,Hoffman95,Hoffman99,Ma12,Ma122,Ma1301} for instance).
Define
$$y=\tan(x),\quad z=\sec(x).$$ Denote by $D$ the differential operator ${d}/{d x}$.
Clearly, we have
$D(y)=1+y^2$ and $D(z)=yz$.
In 1995, Hoffman~\cite{Hoffman95} considered two sequences of {\it derivative polynomials} defined respectively by
$D^n(y)=P_n(y)$ and $D^n(z)=z Q_n(y)$.
From the chain rule it follows that the polynomials $P_n(u)$ satisfy $P_0(u)=u$ and $P_{n+1}(u)=(1+u^2)P_n'(u)$, and similarly $Q_0(u)=1$ and $Q_{n+1}(u)=(1+u^2)Q_n'(u)+uQ_n(u)$.

As shown in~\cite{Hoffman95}, the exponential generating functions
$$P(u,t)=\sum_{n=0}^\infty P_n(u)\frac{t^n}{n!}\quad{\text and} \quad Q(u,t)=\sum_{n=0}^\infty Q_n(u)\frac{t^n}{n!}$$
are given by the explicit formulas
\begin{equation}\label{exponential-deri}
P(u,t)=\frac{u+\tan (t)}{1-u\tan (t)}\quad{\text and} \quad Q(u,t)=\frac{\sec(t)}{1-u\tan (t)}.
\end{equation}

Recall that a {\it descent} of a permutation $\pi\in\msn$
is a position $i$ such that $\pi(i)>\pi(i+1)$, where $1\leq i\leq n-1$. Denote by $\des(\pi)$ the number of descents of $\pi$. Then the equations
\begin{equation*}
A_n(x)=\sum_{\pi\in\msn}x^{\des(\pi)}=\sum_{k=0}^{n-1}\Eulerian{n}{k}x^{k},
\end{equation*}
define the {\it Eulerian polynomial} $A_n(x)$ and the {\it Eulerian number} $\Eulerian{n}{k}$ (see~\cite[A008292]{Sloane}).
For each $\pi\in B_n$,
we define
\begin{equation*}
\des_B(\pi)=\#\{i\in\{0,1,2,\ldots,n-1\}|\pi(i)>\pi({i+1})\},
\end{equation*}
where $\pi(0)=0$. Let
$${B}_n(x)=\sum_{\pi\in B_n}x^{\des_B(\pi)}=\sum_{k=0}^nB(n,k)x^{k}.$$
The polynomial $B_n(x)$ is called an {\it Eulerian polynomial of type $B$}, while $B(n,k)$ is called an {\it Eulerian number of type $B$} (see~\cite[A060187]{Sloane}).

Assume that
$$(Dy)^{n+1}(y)=(Dy)(Dy)^n(y)=D(y(Dy)^n(y)),\quad (Dy)^{n+1}(z)=(Dy)(Dy)^n(z)=D(y(Dy)^n(z)).$$
Note that $D(y)=z^2$ and $D(z)=yz$. Recently, we obtained the following result.
\begin{theorem}[{\cite{Ma1301}}]\label{Ma1301}
For $n\geq 1$, we have
$$(Dy)^n(y)=2^n\sum_{k=0}^{n-1}\Eulerian{n}{k}y^{2n-2k-1}z^{2k+2},\quad
(Dy)^n(z)=\sum_{k=0}^{n}B(n,k)y^{2n-2k}z^{2k+1}.$$
\end{theorem}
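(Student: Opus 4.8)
The plan is to prove both identities simultaneously by induction on $n$, the engine being a single computation describing how the operator $(Dy)$ acts on an arbitrary monomial in $y$ and $z$. Writing $L$ for the operator $f\mapsto D(yf)$, so that $(Dy)^n=L^n$, I first record that for nonnegative integers $a,b$,
$$L(y^az^b)=D(y^{a+1}z^b)=(a+1)y^az^{b+2}+by^{a+2}z^b,$$
which follows from the product rule together with the two basic relations $D(y)=z^2$ and $D(z)=yz$ noted in the excerpt. This is the key lemma, and everything after it is bookkeeping of exponents.

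For the first identity I would induct on $n$, the base case $n=1$ being the direct check $L(y)=2yz^2$ against the right-hand side $2\Eulerian{1}{0}yz^2$. Assuming $(Dy)^n(y)=2^n\sum_{k=0}^{n-1}\Eulerian{n}{k}y^{2n-2k-1}z^{2k+2}$, I apply $L$ termwise using the key lemma with $a=2n-2k-1$ and $b=2k+2$; this produces the two families $(2n-2k)y^{2n-2k-1}z^{2k+4}$ and $(2k+2)y^{2n-2k+1}z^{2k+2}$. Since $2n-2k=2(n-k)$ and $2k+2=2(k+1)$, the powers of $2$ combine to give the overall factor $2^{n+1}$. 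Collecting the coefficient of $y^{2n+1-2j}z^{2j+2}$ — the first family contributing at $k=j-1$ and the second at $k=j$ — leaves the coefficient $(n-j+1)\Eulerian{n}{j-1}+(j+1)\Eulerian{n}{j}$, which is precisely $\Eulerian{n+1}{j}$ by the classical Eulerian recurrence, completing the step.

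The second identity runs along the same lines, with base case $L(z)=y^2z+z^3$ matching $B(1,0)y^2z+B(1,1)z^3$. Starting from $(Dy)^n(z)=\sum_{k=0}^{n}B(n,k)y^{2n-2k}z^{2k+1}$ and applying the key lemma with $a=2n-2k$, $b=2k+1$, the coefficient of $y^{2n+2-2j}z^{2j+1}$ receives $(2n-2j+3)B(n,j-1)$ from the $k=j-1$ term and $(2j+1)B(n,j)$ from the $k=j$ term; their sum is exactly $B(n+1,j)$ by the type $B$ Eulerian recurrence $B(n+1,j)=(2j+1)B(n,j)+(2n-2j+3)B(n,j-1)$.

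In both cases the only point needing a little care is the handling of the extreme indices: the shift $k=j-1$ would formally invoke $\Eulerian{n}{-1}$ or $B(n,-1)$ at $j=0$, and a new top term appears at $j=n$ (resp. $j=n+1$). These are accounted for automatically, since the Eulerian and type $B$ Eulerian numbers vanish outside their ranges and the recurrences reproduce the correct boundary values $\Eulerian{n+1}{0}=\Eulerian{n+1}{n}=1$. I do not anticipate a genuine obstacle; the main thing to get right is the alignment of exponents under the two index shifts so that each recurrence is recognized in its standard form.
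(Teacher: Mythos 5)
Your proof is correct: the key lemma $L(y^az^b)=(a+1)y^az^{b+2}+by^{a+2}z^b$ follows from $D(y)=z^2$, $D(z)=yz$, the index shifts align exactly as you claim, and the resulting coefficient identities are precisely the type $A$ recurrence $\Eulerian{n+1}{j}=(j+1)\Eulerian{n}{j}+(n+1-j)\Eulerian{n}{j-1}$ and the type $B$ recurrence $B(n+1,j)=(2j+1)B(n,j)+(2n-2j+3)B(n,j-1)$, with the boundary terms handled correctly by the vanishing conventions. Note that the paper itself states this theorem as an import from \cite{Ma1301} without reproving it, but your inductive recurrence-matching argument is essentially the same technique the paper uses to prove its own Theorem~\ref{mthm} (posit the expansion, apply the derivation termwise, and identify the recurrence), so your approach matches the paper's methodology.
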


Define $$f=\sec(2x),\quad g=2\tan(2x).$$
In this paper we will mainly consider the following differential system:
\begin{equation}\label{diff}
D(f)=fg, \quad D(g)=4f^2.
\end{equation}

Define $h=\tan(2x)$.
Note that $f^2=1+h^2$ and $g=2h$.
So the following result is immediate.
\begin{proposition}\label{Dnf-Qnh}
For $n\geq 0$, we have
$D^n(f)=2^nfQ_n(h),\quad D^n(g)=2^{n+1}P_n(h).$
\end{proposition}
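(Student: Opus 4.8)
The plan is to reduce both identities to Hoffman's definitions $D^n(y)=P_n(y)$ and $D^n(z)=zQ_n(y)$ by a single change of variable. First I would set $w=2x$ and regard $h=\tan(2x)$ and $f=\sec(2x)$ as the functions $\tan(w)$ and $\sec(w)$ evaluated at $w=2x$. By the chain rule $D=d/dx=2\,d/dw$, so applying $D$ exactly $n$ times contributes a factor $2^n$; that is, $D^n(f)=2^n\left.\frac{d^n}{dw^n}\sec(w)\right|_{w=2x}$ and $D^n(h)=2^n\left.\frac{d^n}{dw^n}\tan(w)\right|_{w=2x}$.

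Next I would substitute the known closed forms. By Hoffman's definitions $\frac{d^n}{dw^n}\sec(w)=\sec(w)\,Q_n(\tan(w))$ and $\frac{d^n}{dw^n}\tan(w)=P_n(\tan(w))$; evaluating at $w=2x$ turns these into $f\,Q_n(h)$ and $P_n(h)$, which immediately yields $D^n(f)=2^n f Q_n(h)$. For the second identity I would use $g=2h$, so that $D^n(g)=2\,D^n(h)=2\cdot 2^n P_n(h)=2^{n+1}P_n(h)$.

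As an alternative and as a consistency check, I would verify both formulas by induction on $n$, using the recurrences $P_{n+1}(u)=(1+u^2)P_n'(u)$ and $Q_{n+1}(u)=(1+u^2)Q_n'(u)+uQ_n(u)$ together with $D(h)=2(1+h^2)$ and $D(f)=fg=2fh$. The base case $n=0$ reads $f=fQ_0(h)$ and $g=2P_0(h)$, which hold since $Q_0(u)=1$ and $P_0(u)=u$. In the inductive step for $f$, the product rule gives $2^n\bigl[\,2fh\,Q_n(h)+2(1+h^2)f Q_n'(h)\,\bigr]=2^{n+1}f\bigl[hQ_n(h)+(1+h^2)Q_n'(h)\bigr]=2^{n+1}fQ_{n+1}(h)$, and the step for $g$ is analogous and shorter.

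The computation is routine, so there is no serious obstacle; the only point requiring care is the bookkeeping of the factors of $2$. These enter both through $D=2\,d/dw$ (equivalently through $D(h)=2(1+h^2)$ and $D(f)=2fh$) and, in the tangent case, through the prefactor $g=2h$, which is exactly what shifts the power of $2$ from $2^n$ to $2^{n+1}$.
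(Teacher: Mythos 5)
Your proposal is correct and follows essentially the same route as the paper, which declares the proposition ``immediate'' precisely from the observations you make explicit: $h=\tan(2x)$, $g=2h$, and the chain-rule factor of $2$ that converts Hoffman's identities $D^n(\tan)=P_n(\tan)$ and $D^n(\sec)=\sec\,Q_n(\tan)$ into the stated formulas. Your careful bookkeeping of the powers of $2$, together with the inductive consistency check via the recurrences for $P_n$ and $Q_n$, simply fills in the details the paper leaves to the reader.
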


In the next section, we collect some notation and definitions that will be needed in the rest of the paper.
\section{Notation, definitions and preliminaries}\label{notation}
The {\it $h$-polynomial} of a $(d-1)$-dimension simplicial complex $\Delta$ is the generating function
$h(\Delta;x)=\sum_{i=0}^dh_i(\Delta)x^i$ defined by the following identity:
$$\sum_{i=0}^dh_i(\Delta)x^i(1+x)^{d-i}=\sum_{i=0}^df_{i-1}(\Delta)x^i,$$
where $f_{i}(\Delta)$ is the number of faces of $\Delta$ of dimension $i$.
There is a large literature devoted to the
$h$-polynomials of the form
$$h(\Delta;x)=\sum_{i=0}^{\lrf{{d}/{2}}}\gamma_ix^i(1+x)^{d-2i},$$
where the coefficients $\gamma_i$ are nonnegative.
Following Gal~\cite{Gal05}, we call $(\gamma_0,\gamma_1,\ldots)$ the $\gamma$-vector of $\Delta$, and the corresponding
generating function $\gamma(\Delta;x)=\sum_{i\geq 0}\gamma_ix^i$ is the
{\it $\gamma$-polynomial}. In particular, the Eulerian polynomials $A_n(x)$ and $B_n(x)$ are respectively known as
the $h$-polynomials of Coxeter complexes of types $A$ and $B$.

Let us now recall two classical result.
\begin{theorem}[\cite{Foata70,Shapiro83}]\label{Foata}
For $n\geq 1$, we have
\begin{equation*}
A_n(x)=\sum_{k=0}^{\lrf{({n-1})/{2}}}a(n,k)x^k(1+x)^{n-1-2k}.
\end{equation*}
\end{theorem}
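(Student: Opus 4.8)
The plan is to prove the sharper fact that each $a(n,k)$ is a positive integer, because the mere existence of a real expansion is automatic and carries no content. Indeed, the symmetry $\Eulerian{n}{k}=\Eulerian{n}{n-1-k}$ shows that $A_n(x)$ is palindromic with center of symmetry $(n-1)/2$, and every candidate basis element $x^k(1+x)^{n-1-2k}$ is palindromic with exactly the same center. These $\lrf{(n-1)/2}+1$ polynomials are linearly independent, since they have distinct lowest-degree terms $x^k$, so they form a basis of the space of palindromic polynomials of degree $\le n-1$ with that center, a space already containing $A_n(x)$. Hence real numbers $a(n,k)$ realizing the expansion exist and are unique, and the entire content of the theorem is their nonnegativity.

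To pin down the $a(n,k)$ I would convert the identity into a recurrence. Beginning from the classical Eulerian recurrence
\begin{equation*}
A_{n+1}(x)=(1+nx)A_n(x)+x(1-x)A_n'(x),\qquad A_1(x)=1,
\end{equation*}
I would insert $A_n(x)=\sum_k a(n,k)\,x^k(1+x)^{n-1-2k}$, differentiate termwise, and everywhere replace $1-x$ by $(1+x)-2x$ so as to re-express the output in the degree-$n$ basis $x^j(1+x)^{n-2j}$. Reading off the coefficient of $x^j(1+x)^{n-2j}$ should give
\begin{equation*}
a(n+1,k)=(k+1)\,a(n,k)+2(n+1-2k)\,a(n,k-1),
\end{equation*}
with $a(1,0)=1$ and $a(n,k)=0$ for $k$ outside $\{0,1,\dots,\lrf{(n-1)/2}\}$. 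For $k$ in the admissible range both multipliers $k+1$ and $2(n+1-2k)$ are nonnegative (the latter is at least $2$), so induction on $n$ immediately yields that every $a(n,k)$ is a positive integer. The one delicate point is the reassembly just described: the degree-$(n-1)$ and degree-$n$ gamma bases have opposite parity in the exponent of $(1+x)$, so an individual term like $x^k(1+x)^{n-1-2k}$ is not itself a member of the target basis, and only after summing over $k$ do these half-offset pieces combine into honest multiples of $x^j(1+x)^{n-2j}$. Carrying out this collection globally, rather than term by term, is the main obstacle, though it is routine once organized.

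As an alternative that avoids the algebra and makes nonnegativity transparent, I would invoke the Foata--Strehl ``valley-hopping'' action on $\msn$. Its orbits are indexed by the permutations with no double descents; an orbit whose representative has $k$ descents has $n-1-2k$ free letters and hence size $2^{\,n-1-2k}$, and the statistic $\des$ distributes over it as $x^k(1+x)^{n-1-2k}$. Summing $\sum_{\pi\in\msn}x^{\des(\pi)}=A_n(x)$ over orbits then reproduces the expansion with $a(n,k)$ equal to the number of double-descent-free representatives with $k$ descents, so nonnegativity is combinatorially manifest. Finally, in keeping with the theme of this paper it is worth noting that the extremal coefficients $a(2m+1,m)$ are exactly the tangent numbers, which is consistent with Theorem~\ref{Ma1301} and with the reading of $A_n(x)$ through derivative polynomials.
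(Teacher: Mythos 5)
The paper never proves this statement---it is quoted as a classical theorem with citations to Foata--Sch\"utzenberger and Shapiro--Woan--Getu---so there is no internal proof to compare against; your argument stands on its own, and it is correct. In fact your first route is methodologically identical to how the paper proves its own Theorem~\ref{mthm}: postulate the expansion, act on it with the defining recurrence/derivation, and match the resulting coefficient recurrence against a known one. Two comments. First, the ``delicate point'' you flag is not actually delicate: writing $L=(1+nx)+x(1-x)\frac{d}{dx}$ for the Eulerian recurrence operator, a direct computation gives
\begin{equation*}
L\bigl[x^k(1+x)^{n-1-2k}\bigr]=(k+1)\,x^k(1+x)^{n-2k}+2(n-1-2k)\,x^{k+1}(1+x)^{n-2-2k},
\end{equation*}
so each element of the degree-$(n-1)$ gamma basis is carried by $L$ into a nonnegative combination of exactly \emph{two} elements of the degree-$n$ gamma basis; no global reassembly is needed, and your recurrence $a(n+1,k)=(k+1)a(n,k)+2(n+1-2k)a(n,k-1)$ drops out term by term. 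That recurrence agrees with the one the paper records after Theorem~\ref{Chow}, and since the paper \emph{defines} $a(n,k)$ by that recurrence and the same initial conditions, this match (together with your uniqueness argument via palindromicity and linear independence, which is what legitimizes defining coefficients by the expansion at all) completes the proof. Second, your valley-hopping alternative is also sound---it is essentially the original Foata--Strehl argument, provided you fix the boundary convention (compare with $\pi(0)=\pi(n+1)=0$ when detecting double descents, so that a final descent counts as a double descent)---and it buys what the recurrence route cannot: a combinatorial interpretation of $a(n,k)$ as the number of double-descent-free permutations with $k$ descents. Your closing remark that $a(2m+1,m)=E_{2m+1}$ is correct as well (set $x=-1$ in the expansion), and it ties back nicely to the tangent-number theme of the paper.
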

\begin{theorem}[{\cite{Chow08,NevoPetersen11,Petersen07}}]\label{Chow}
For $n\geq 1$, we have
\begin{equation*}
B_n(x)=\sum_{k=0}^{\lrf{{n}/{2}}}b(n,k)x^k(1+x)^{n-2k}.
\end{equation*}
\end{theorem}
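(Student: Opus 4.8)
The plan is to prove Theorem~\ref{Chow} in two stages. First I would show that $B_n(x)$ is symmetric (palindromic), which is precisely the condition guaranteeing that an expansion in the basis $\{x^k(1+x)^{n-2k}\}$ exists; then I would show that the resulting coefficients $b(n,k)$ are nonnegative, which is the genuine $\gamma$-positivity content and the part for which the cited references are really needed. The first stage reduces to linear algebra once the symmetry is in hand, whereas the second stage is where the actual work lies.

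For the symmetry I would exhibit an involution on $B_n$ that complements the type-$B$ descent statistic. The natural candidate is negation, $\pi\mapsto\sigma$ with $\sigma(i)=-\pi(i)$; one checks directly that $\sigma\in B_n$, since $\sigma(-i)=-\pi(-i)=\pi(i)=-\sigma(i)$. Because $\pi(0)=0=\sigma(0)$ is fixed, every one of the $n$ comparisons $\pi(i)>\pi(i+1)$ for $i\in\{0,1,\ldots,n-1\}$ is reversed under negation, including the boundary comparison at $i=0$, where $0>\pi(1)$ becomes $0<\sigma(1)$. Hence $\des_B(\sigma)=n-\des_B(\pi)$, so $B(n,k)=B(n,n-k)$ and $x^nB_n(1/x)=B_n(x)$. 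Now the space of polynomials of degree at most $n$ satisfying $x^np(1/x)=p(x)$ has dimension $\lrf{n/2}+1$, since the coefficients obey $c_j=c_{n-j}$. The $\lrf{n/2}+1$ polynomials $x^k(1+x)^{n-2k}$ lie in this space, are linearly independent (they have distinct $x$-adic valuations $k$), and therefore form a basis. Expanding $B_n(x)$ in this basis yields the claimed identity and defines $b(n,k)$ uniquely.

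The main obstacle is the nonnegativity of $b(n,k)$. I would obtain it via a group action of Foata--Strehl type (valley-hopping) adapted to signed permutations: for each coordinate one defines a move that slides a letter past its neighbours without disturbing the "valley" positions, and one checks that these moves generate an action partitioning $B_n$ into orbits. The restriction of $\sum_{\pi\in B_n}x^{\des_B(\pi)}$ to a single orbit collapses to $x^k(1+x)^{n-2k}$, where $k$ records the number of valleys shared by the orbit; summing over orbits then gives $B_n(x)=\sum_k b(n,k)x^k(1+x)^{n-2k}$ with $b(n,k)$ equal to the number of orbits having exactly $k$ valleys, which is manifestly a nonnegative integer. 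The delicate points here are the correct definition of the signed action at the boundary position $i=0$ (where $\pi(0)=0$) and the verification that each orbit contributes precisely one factor $(1+x)$ per non-valley coordinate.

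An alternative route, more in the spirit of the present paper, would start from Theorem~\ref{Ma1301}: writing $(Dy)^n(z)=\sum_k B(n,k)y^{2n-2k}z^{2k+1}$ and substituting $z^2=1+y^2$ transports the symmetric expansion of $B_n(x)$ into the $y,z$ variables, so that the $\gamma$-coefficients $b(n,k)$ can be read off from the derivative polynomial $Q_n$ through Proposition~\ref{Dnf-Qnh}. In this formulation the positivity would follow from the nonnegativity of the coefficients in the grammatical description of $Q_n$, which is exactly the mechanism developed in the remainder of the paper.
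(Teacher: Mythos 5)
Your proposal cannot be compared against a proof in the paper, because the paper gives none: Theorem~\ref{Chow} is recalled as a classical result and attributed to the cited references. Judged on its own terms, your first stage is correct and complete. Negation $\pi\mapsto\sigma$ with $\sigma(i)=-\pi(i)$ is an involution on $B_n$, and since $\pi(i)\neq\pi(i+1)$ for $1\leq i\leq n-1$ and $\pi(1)\neq 0$, each of the $n$ comparisons defining $\des_B$ (including the one at $i=0$) is reversed, so $B(n,k)=B(n,n-k)$. The linear-algebra step is also sound: the polynomials of degree at most $n$ satisfying $x^np(1/x)=p(x)$ form a space of dimension $\lrf{n/2}+1$, and the $\lrf{n/2}+1$ polynomials $x^k(1+x)^{n-2k}$ lie in it and are independent (distinct orders of vanishing at $0$), hence form a basis. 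This yields existence and uniqueness of the coefficients $b(n,k)$.

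The genuine gap is your second stage, which is where the entire content of the theorem lives: the coefficients produced by stage one must be shown nonnegative (equivalently, in the paper's usage, to be the nonnegative integers governed by recurrence~\eqref{bnk-recu}), and for this you offer only a declaration of intent. The type-$B$ valley-hopping action is never defined; you flag the boundary position $i=0$ as delicate but do not resolve it, and you never say how sign changes enter the moves at all, even though any such action must partition all $2^nn!$ signed permutations into orbits whose restricted descent polynomials are $x^k(1+x)^{n-2k}$ (forcing orbit sizes $2^{n-2k}$). Constructing and verifying such an action is exactly the nontrivial work done in the cited references; asserting that "one checks that these moves generate an action" does not discharge it. Your alternative route is also not viable as stated, because within this paper it is circular: the proof of Theorem~\ref{mthm} identifies the grammar coefficients with $b(n,k)$ precisely by comparing with recurrence~\eqref{bnk-recu}, which is imported from Chow's work, and Theorem~\ref{thm1} likewise presupposes the expansion being proved. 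To make that route honest you would need an independent bridge from $Q_n$ to $B_n(x)$ --- for instance via Theorem~\ref{Ma1301} combined with the half-angle substitutions underlying Proposition~\ref{Dnf-Qnh} --- and that bridge is exactly the step missing from your sketch.
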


It is well known that the numbers $a(n,k)$ satisfy the recurrence
\begin{equation*}\label{ank-recu}
a(n,k)=(k+1)a(n-1,k)+(2n-4k)a(n-1,k-1),
\end{equation*}
with the initial conditions $a(1,0)=1$ and $a(1,k)=0$ for $k\geq 1$ (see~\cite[A101280]{Sloane}), and the
numbers $b(n,k)$ satisfy the recurrence
\begin{equation}\label{bnk-recu}
b(n,k)=(2k+1)b(n-1,k)+4(n+1-2k)b(n-1,k-1),
\end{equation}
with the initial conditions $b(1,0)=1$ and $b(1,k)=0$ for $k\geq 1$ (see~\cite[Section 4]{Chow08}).

The $h$-polynomials of the types $A$ and $B$ associahedrons are respectively given as
follows (see~\cite{Mansour09,Marberg12,Postnikov06,Simion03} for instance):
\begin{equation}\label{Anx}
h(\Delta_{FZ}(A_{n-1}),x)=\frac{1}{n}\sum_{k=0}^{n-1}\binom{n}{k}\binom{n}{k+1}x^k
=\sum_{k=0}^{\lrf{{(n-1)}/{2}}}C_k\binom{n-1}{2k}x^k(1+x)^{n-1-2k},
\end{equation}
\begin{equation}\label{Bnx}
h(\Delta_{FZ}(B_n),x)=\sum_{k=0}^{n}{\binom{n}{k}}^2x^k
=\sum_{k=0}^{\lrf{{n}/{2}}}\binom{2k}{k}\binom{n}{2k}x^k(1+x)^{n-2k},
\end{equation}
where $C_k=\frac{1}{k+1}\binom{2k}{k}$ is the $k$th {\it Catalan number} and
the coefficients of $x^k$ of $h(\Delta_{FZ}(A_{n-1}),x)$ is the {\it Narayana number} $N(n,k+1)$.

Define $$F(n,k)=C_k\binom{n-1}{2k},\quad H(n,k)=\binom{2k}{k}\binom{n}{2k}.$$

There are many combinatorial interpretations of the number $F(n,k)$, such as $F(n,k)$ is number of {\it Motzkin paths} of length $n-1$ with $k$ up steps (see~\cite[A055151]{Sloane}).
It is easy to verify that the numbers $F(n,k)$ satisfy the recurrence relation
$$(n+1)F(n,k)=(n+2k+1)F(n-1,k)+4(n-2k)F(n-1,k-1),$$
with initial conditions $F(1,0)=1$ and $F(1,k)=0$ for $k\geq 1$, and the numbers
$H(n,k)$ satisfy the recurrence relation
\begin{equation*}
nH(n,k)=(n+2k)H(n-1,k)+4(n-2k+1)H(n-1,k-1),
\end{equation*}
with initial conditions $H(1,0)=1$ and $H(1,k)=0$ for $k\geq 1$ (see~\cite[A089627]{Sloane}).
\section{$\gamma$-vectors}\label{Section-3}
Define the generating
functions
$$a_n(x)=\sum_{k\geq 0}a(n,k)x^k,\quad b_n(x)=\sum_{k\geq 0}b(n,k)x^k.$$
The first few $a_n(x)$ and $b_n(x)$ are respectively given as follows:
$$a_1(x)=1,a_2(x)=1, a_3(x)=1+2x,a_4(x)=1+8x;$$
$$b_1(x)=1,b_2(x)=1+4x, b_3(x)=1+20x,b_4(x)=1+72x+80x^2.$$

Combining~\eqref{exponential-deri} and~\cite[Prop.~3.5, Prop.~4.10]{Chow08}, we immediately get the following result.
\begin{theorem}\label{thm1}
For $n\geq 1$, we have
$$a_n(x)=\frac{1}{x}\left(\frac{\sqrt{4x-1}}{2}\right)^{n+1}P_n\left(\frac{1}{\sqrt{4x-1}}\right),\quad
b_n(x)=(4x-1)^{\frac{n}{2}}Q_n\left(\frac{1}{\sqrt{4x-1}}\right).$$
\end{theorem}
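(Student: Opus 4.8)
The plan is to prove both identities by matching exponential generating functions: I would use the $\gamma$-decompositions of Theorems~\ref{Foata} and~\ref{Chow} to pass from the Eulerian polynomials to their $\gamma$-polynomials, and then compare the resulting generating functions with the closed forms~\eqref{exponential-deri}. First I would record the substitution that converts a $\gamma$-decomposition into the corresponding $\gamma$-polynomial. Writing the Eulerian variable as $p$ and setting $x=\frac{p}{(1+p)^2}$, Theorems~\ref{Foata} and~\ref{Chow} give $a_n(x)=A_n(p)/(1+p)^{n-1}$ and $b_n(x)=B_n(p)/(1+p)^{n}$. A direct computation yields $4x-1=-\left(\frac{1-p}{1+p}\right)^2$, which pins down the branch of the radical appearing in the statement; in particular both $1/\sqrt{4x-1}$ and $\frac{\sqrt{4x-1}}{2}$ become rational functions of $p$.

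The key step is the change of variable $p=e^{2i\theta}$, under which $\frac{1-p}{1+p}=-i\tan\theta$, hence $\sqrt{4x-1}=\tan\theta$ and $u:=1/\sqrt{4x-1}=\cot\theta$. Multiplying the $\gamma$-polynomial identities by $t^n/n!$ and summing, the classical Eulerian generating functions collapse to purely trigonometric closed forms: a short manipulation of complex exponentials shows that $\sum_{n\ge 0}b_n(x)\frac{t^n}{n!}$ equals $\frac{\sin\theta}{\sin(\theta-t\tan\theta)}$, while the type~$A$ sum $\sum_{n\ge 1}a_n(x)\frac{t^n}{n!}$ reduces to $2\cos\theta\cdot\frac{\sin\psi}{\sin(\theta-\psi)}$ with $\psi=\tfrac{1}{2}t\tan\theta$. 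On the other side, evaluating~\eqref{exponential-deri} at the shifted arguments $Q(u,t/u)$ and $P(u,t/(2u))$ and simplifying with $u=\cot\theta$ (for type~$A$ also using the prefactor $\frac{2u}{1+u^2}=\sin 2\theta$ and subtracting the term $P_0(u)=u$) produces exactly these same trigonometric expressions. Comparing the coefficients of $t^n/n!$ then gives $b_n(x)=u^{-n}Q_n(u)$ and $a_n(x)=\frac{P_n(u)}{(1+u^2)(2u)^{n-1}}$.

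Finally I would reinsert $u=1/\sqrt{4x-1}$ together with $1+u^2=\frac{4x}{4x-1}$ to rewrite these in the form claimed: the type~$B$ identity is immediate, and for type~$A$ the factors combine as $\frac{P_n(u)}{(1+u^2)(2u)^{n-1}}=\frac{1}{x}\left(\frac{\sqrt{4x-1}}{2}\right)^{n+1}P_n(u)$, which is the stated formula. As a consistency check I would verify the cases $n=1,2$ directly against $a_1(x)=a_2(x)=1$, $b_1(x)=1$, $b_2(x)=1+4x$ using $P_1(u)=1+u^2$, $Q_1(u)=u$, $Q_2(u)=1+2u^2$.

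The main obstacle is the bookkeeping in the exponential-to-trigonometric reduction: one must factor the differences $e^{2i\theta}-e^{2i\psi}$ correctly, track the normalization of the Eulerian generating function (the $n=0$ term and the extra factor coming from $s=t/(1+p)$), and confirm that the radical is taken on the branch dictated by $4x-1=-(\tfrac{1-p}{1+p})^2$. The propositions of Chow~\cite{Chow08} supply precisely the trigonometric generating functions for $a_n(x)$ and $b_n(x)$, so once this substitution is in place the comparison with~\eqref{exponential-deri} is immediate.
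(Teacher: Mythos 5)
Your proposal is correct and follows essentially the same route as the paper: the paper's one-line proof consists exactly of matching the trigonometric generating functions for $a_n(x)$ and $b_n(x)$ given in \cite[Prop.~3.5, Prop.~4.10]{Chow08} against Hoffman's closed forms~\eqref{exponential-deri} under the substitution $u=1/\sqrt{4x-1}$, which is precisely your comparison of $\frac{\sin\theta}{\sin(\theta-t\tan\theta)}$ with $Q(u,t/u)$ and of $2\cos\theta\,\frac{\sin\psi}{\sin(\theta-\psi)}$ with the shifted $P$-series. The only difference is that you re-derive Chow's propositions from the classical Eulerian generating functions via $p=e^{2i\theta}$ rather than citing them, which makes the argument self-contained but does not change the underlying idea; your intermediate identities and the final algebraic reduction to the stated form all check out.
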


Assume that
$$(fD)^{n+1}(f)=(fD)(fD)^n(f)=fD((fD)^n(f)),$$
$$(fD)^{n+1}(g)=(fD)(fD)^n(g)=fD((fD)^n(g)).$$

We can now present the main result of this paper.
\begin{theorem}\label{mthm}
For $n\geq 1$, we have
\begin{equation*}
\begin{split}
D^n(f)&=\sum_{k=0}^{\lrf{{n}/{2}}}b(n,k)f^{2k+1}g^{n-2k},\\
D^n(g)&=2^{n+1}\sum_{k=0}^{\lrf{{n-1}/{2}}}a(n,k)f^{2k+2}g^{n-1-2k},\\
(fD)^n(f)&=n!\sum_{k=0}^{\lrf{{n}/{2}}}H(n,k)f^{n+1+2k}g^{n-2k},\\
(fD)^n(g)&=2(n+1)!\sum_{k=0}^{\lrf{{(n-1)}/{2}}}F(n,k)f^{n+2+2k}g^{n-1-2k}.
\end{split}
\end{equation*}
\end{theorem}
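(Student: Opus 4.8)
The plan is to establish all four identities simultaneously by a single induction on $n$, feeding each differentiation through the two rules $D(f)=fg$ and $D(g)=4f^2$ of \eqref{diff} and matching the resulting coefficient recurrences against those recorded for $b(n,k)$, $a(n,k)$, $H(n,k)$ and $F(n,k)$ in Section~\ref{notation}. The base case $n=1$ is immediate: $D(f)=fg=b(1,0)fg$, $D(g)=4f^2=2^2a(1,0)f^2$, $(fD)(f)=f\cdot fg=1!\,H(1,0)f^2g$, and $(fD)(g)=f\cdot 4f^2=2\cdot 2!\,F(1,0)f^3$.

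For the inductive step of the first identity I would differentiate $D^n(f)=\sum_k b(n,k)f^{2k+1}g^{n-2k}$ monomial by monomial. The product rule together with \eqref{diff} gives
\[
D\!\left(f^{2k+1}g^{n-2k}\right)=(2k+1)f^{2k+1}g^{n+1-2k}+4(n-2k)f^{2k+3}g^{n-1-2k},
\]
so the first term keeps the $f$-degree index $k$ while the second raises it to $k+1$. Collecting the coefficient of $f^{2j+1}g^{n+1-2j}$ in $D^{n+1}(f)$ yields $(2j+1)b(n,j)+4(n+2-2j)b(n,j-1)$, which is exactly $b(n+1,j)$ by the recurrence \eqref{bnk-recu}. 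The second identity is the same computation: $D\!\left(f^{2k+2}g^{n-1-2k}\right)=(2k+2)f^{2k+2}g^{n-2k}+4(n-1-2k)f^{2k+4}g^{n-2-2k}$, and after the shift $k\mapsto k-1$ the coefficient of $f^{2j+2}g^{n-2j}$ in $D^{n+1}(g)$ becomes $2^{n+2}\bigl[(j+1)a(n,j)+2(n+1-2j)a(n,j-1)\bigr]$, matching $2^{n+2}a(n+1,j)$ through the recurrence for $a(n,k)$.

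The two $(fD)$-identities run identically, the only difference being the extra left-multiplication by $f$ that lifts every $f$-exponent by one more unit. For the third one,
\[
fD\!\left(f^{n+1+2k}g^{n-2k}\right)=(n+1+2k)f^{n+2+2k}g^{n+1-2k}+4(n-2k)f^{n+4+2k}g^{n-1-2k},
\]
and the coefficient of $f^{n+2+2j}g^{n+1-2j}$ in $(fD)^{n+1}(f)$ comes out as $n!\bigl[(n+1+2j)H(n,j)+4(n+2-2j)H(n,j-1)\bigr]$; the recurrence $nH(n,k)=(n+2k)H(n-1,k)+4(n-2k+1)H(n-1,k-1)$ collapses the bracket to $(n+1)H(n+1,j)$, as needed. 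The fourth identity is handled the same way against $(n+1)F(n,k)=(n+2k+1)F(n-1,k)+4(n-2k)F(n-1,k-1)$, the prefactor $2(n+1)!$ turning into $2(n+2)!$ precisely because $(n+2)F(n+1,j)=(n+2+2j)F(n,j)+4(n+1-2j)F(n,j-1)$.

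I do not expect a genuine obstacle: the argument is a bookkeeping induction whose real content is that the four combinatorial recurrences of Section~\ref{notation} are \emph{exactly} the recurrences forced by $D(f)=fg$ and $D(g)=4f^2$. The only points needing a little care are the reindexing $k\mapsto k-1$ of the shifted summand and the ends of the ranges $0\le k\le\lrf{n/2}$ (resp. $\lrf{(n-1)/2}$): at the top of each range the shift factor, namely $4(n-2k)$ or $4(n-1-2k)$, vanishes exactly when the shifted monomial would carry a negative power of $g$, so no spurious terms are produced and the summation limits for $n+1$ come out correctly. As a consistency check one may also note that the first two identities follow alternatively from Proposition~\ref{Dnf-Qnh} and Theorem~\ref{thm1} after the substitution $g=2h$, $f^2=1+h^2$, though the $(fD)$-versions seem to require the direct induction above.
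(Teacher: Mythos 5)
Your proposal is correct and follows essentially the same route as the paper: an induction that pushes $D$ (or $fD$) through the monomial expansion using $D(f)=fg$, $D(g)=4f^2$, and identifies the resulting coefficient recurrences with those of $b(n,k)$, $a(n,k)$, $H(n,k)$, $F(n,k)$ from Section~\ref{notation}. The paper carries this out explicitly only for $D^n(f)$ and declares the other three cases analogous, so your write-up simply makes the "similar way" explicit (and correctly, including the base cases and boundary terms).
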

\begin{proof}
We only prove the assertion for $D^n(f)$ and the others can be proved in a similar way.
It follows from~\eqref{diff} that $D(f)=fg$ and $D^2(f)=fg^2+4f^3$. For $n\geq 0$, we define $\widetilde{b}(n,k)$ by
\begin{equation}\label{Dnf}
D^n(f)=\sum_{k=0}^{\lrf{{n}/{2}}}\widetilde{b}(n,k)f^{2k+1}g^{n-2k},
\end{equation}
Then $\widetilde{b}(1,0)=1$ and $\widetilde{b}(1,k)=0$ for $k\geq 1$. It follows from~\eqref{Dnf} that
$$D(D^n(f))=\sum_{k=0}^{\lrf{{n}/{2}}}(2k+1)\widetilde{b}(n,k)f^{2k+1}g^{n-2k+1}+
4\sum_{k=0}^{\lrf{{n}/{2}}}(n-2k)\widetilde{b}(n,k)f^{2k+3}g^{n-2k-1}.$$
We therefore conclude that
$\widetilde{b}(n+1,k)=(2k+1)\widetilde{b}(n,k)+4(n+2-2k)\widetilde{b}(n,k-1)$
and complete the proof by comparing it with~\eqref{bnk-recu}.
\end{proof}

Define
\begin{equation*}
\begin{split}
N_n(x)&=\frac{1}{n}\sum_{k=0}^{n-1}\binom{n}{k}\binom{n}{k+1}(x+1)^k(x-1)^{n-1-k},\\
L_n(x)&=\sum_{k=0}^n{\binom{n}{k}}^2(x+1)^k(x-1)^{n-k}.
\end{split}
\end{equation*}

Taking $f^2=1+h^2$ and $g=2h$ in Theorem~\ref{mthm} leads to the following result and we omit the proof of it, since it is a straightforward application of~\eqref{Anx} and~\eqref{Bnx}.
\begin{corollary}\label{cor-1}
For $n\geq 1$, we have
\begin{equation*}
\begin{split}
(fD)^n(f)&=n!f^{n+1}(-\imath)^nL_n(\imath h),\\
(fD)^n(g)&=2(n+1)!f^{n+2}(-\imath)^{n-1}N_n(\imath h),
\end{split}
\end{equation*}
where $\imath=\sqrt{-1}$.
\end{corollary}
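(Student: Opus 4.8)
The plan is to substitute the identities $f^2=1+h^2$ and $g=2h$ directly into the two formulas for $(fD)^n(f)$ and $(fD)^n(g)$ furnished by Theorem~\ref{mthm}, and then to recognize each resulting polynomial in $h$ as a rescaled evaluation of $L_n$ or $N_n$ at $\imath h$. For the first formula, I would factor $f^{n+1}$ out of the sum in Theorem~\ref{mthm} to write
$$(fD)^n(f)=n!\,f^{n+1}\sum_{k=0}^{\lrf{n/2}}H(n,k)f^{2k}g^{n-2k},$$
and then replace $f^{2k}$ by $(1+h^2)^k$ and $g^{n-2k}$ by $(2h)^{n-2k}$. The claim then reduces to the purely algebraic identity
$$\sum_{k=0}^{\lrf{n/2}}H(n,k)(1+h^2)^k(2h)^{n-2k}=(-\imath)^nL_n(\imath h).$$

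The central step is to bring the definition of $L_n(x)$ into a form compatible with the $\gamma$-expansion in~\eqref{Bnx}. Pulling $(x-1)^n$ out of the definition of $L_n$ gives $L_n(x)=(x-1)^n\,h(\Delta_{FZ}(B_n),\tfrac{x+1}{x-1})$; inserting $t=\tfrac{x+1}{x-1}$ into the $\gamma$-form of~\eqref{Bnx} and using $1+t=\tfrac{2x}{x-1}$, the powers of $(x-1)$ collapse and leave the clean expression
$$L_n(x)=\sum_{k=0}^{\lrf{n/2}}H(n,k)(x^2-1)^k(2x)^{n-2k}.$$
An entirely parallel computation, starting from~\eqref{Anx}, yields $N_n(x)=\sum_{k}F(n,k)(x^2-1)^k(2x)^{n-1-2k}$.

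Finally, I would set $x=\imath h$, so that $x^2-1=-(1+h^2)$ and $2x=2\imath h$, and track the powers of $\imath$ and of $-1$. The factor $(x^2-1)^k$ contributes $(-1)^k$, while $(2x)^{n-2k}$ contributes $\imath^{n-2k}=\imath^n(-1)^k$; these two signs cancel, giving $L_n(\imath h)=\imath^n\sum_kH(n,k)(1+h^2)^k(2h)^{n-2k}$, and since $\imath^{-n}=(-\imath)^n$ this is exactly the desired identity. The formula for $(fD)^n(g)$ follows identically after factoring out $f^{n+2}$ and using the analogous $\imath^{n-1-2k}=\imath^{n-1}(-1)^k$ bookkeeping. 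There is no genuine difficulty in this argument; the only point demanding care is the consistent accounting of the powers of $\imath$ together with the sign $(-1)^k$ arising from $x^2-1$, which must cancel precisely so that the substitution $x=\imath h$ returns a real-coefficient polynomial in $h$ matching Theorem~\ref{mthm}.
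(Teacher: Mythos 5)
Your proposal is correct and is precisely the argument the paper has in mind: the paper states the corollary as an immediate consequence of substituting $f^2=1+h^2$, $g=2h$ into Theorem~\ref{mthm} together with the expansions~\eqref{Anx} and~\eqref{Bnx}, and omits the details. Your write-up simply fills in those omitted details (rewriting $L_n$ and $N_n$ in the $\gamma$-form via $t=\tfrac{x+1}{x-1}$ and tracking the powers of $\imath$), and the bookkeeping is accurate.
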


It should be noted that the polynomial $\frac{1}{2^n}L_n(x)$ is the famous {\it Legendre polynomial}~\cite[A100258]{Sloane}.
Therefore, from Corollary~\ref{cor-1}, we see that the Legendre polynomial can be generated by $(fD)^n(f)$.
\section{Context-free grammars}\label{Section-4}
Many combinatorial objects permit grammatical interpretations~(see~\cite{Chen93,Chen121,Ma1302} for instance).
The grammatical method was systematically introduced by Chen~\cite{Chen93}
in the study of exponential structures in combinatorics.
Let $A$ be an alphabet whose letters are regarded as independent commutative indeterminates.
A {\it context-free grammar} $G$ over $A$ is defined as a set
of substitution rules that replace a letter in $A$ by a formal function over $A$.
The formal derivative $D$ is a linear operator defined with respect to a context-free grammar $G$.
For example, if $G=\{u\rightarrow uv, v\rightarrow v\}$, then $$D(u)=uv,D(v)=v,D^2(u)=u(v+v^2),D^3(u)=u(v+3v^2+v^3).$$

It follows from Theorem~\ref{mthm} that the $\gamma$-vectors of Coxeter complexes (of types $A$ and $B$) and
associahedrons (of types $A$ and $B$) can be respectively generated
by the grammars $$G_1=\{u\rightarrow uv,v\rightarrow 4u^2\}$$
and
\begin{equation}\label{G2-def}
G_2=\{u\rightarrow u^2v,v\rightarrow 4u^3\}.
\end{equation}

There are many sequences can be generated by the grammar~(\ref{G2-def}).
A special interesting result is the following.
\begin{proposition}\label{pro-Dnuv}
Let $G$ be the same as in~\eqref{G2-def}. Then
\begin{equation*}\label{Dnuv}
D^{n}(uv)=n!\sum_{k=0}^{\lrf{{(n+1)}/{2}}} 4^k\binom{n+1}{2k}u^{n+1+2k}v^{n+1-2k}.
\end{equation*}
\end{proposition}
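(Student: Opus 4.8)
The plan is to prove the identity by induction on $n$, working directly with the grammar $G_2=\{u\rightarrow u^2v,\ v\rightarrow 4u^3\}$. First I would set up notation: write
\begin{equation*}
D^n(uv)=n!\sum_{k=0}^{\lrf{(n+1)/2}}c(n,k)u^{n+1+2k}v^{n+1-2k},
\end{equation*}
so that the claim reduces to showing $c(n,k)=4^k\binom{n+1}{2k}$. The base case $n=0$ gives $D^0(uv)=uv$, matching $c(0,0)=\binom{1}{0}=1$, and a quick check of $n=1$ (where $D(uv)=D(u)v+uD(v)=u^2v^2+4u^4$) confirms the pattern.

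The central step is to extract the recurrence satisfied by $c(n,k)$. Applying $D$ to a generic monomial $u^{n+1+2k}v^{n+1-2k}$ using the product rule and the substitution rules, the exponents of $u$ each decrease the $v$-exponent by one while raising the $u$-exponent, and the $v$-substitution shifts a monomial with $v^{n+1-2k}$ into one with $u^{n+3+2k}v^{n-2k}$. Carefully tracking how each term of $D^{n}(uv)$ contributes to $D^{n+1}(uv)$, and renormalizing by the factor $(n+1)!/n!=n+1$, I expect to obtain a two-term recurrence of the form
\begin{equation*}
(n+1)c(n+1,k)=(n+1+2k)c(n,k)+4(n+2-2k)c(n,k-1),
\end{equation*}
or something structurally identical to it. The main obstacle is precisely this bookkeeping: one must verify that the $u^{n+1+2k}v^{n+1-2k}$ term of $D^{n+1}(uv)$ receives exactly two contributions — one from differentiating the $v$'s in the $u^{n+1+2k}v^{n+1-2k}$ term of the previous level and one from the $v\rightarrow 4u^3$ rule acting on the $u^{(n+1)+2(k-1)}v^{(n+1)-2(k-1)}$ term — with the correct integer coefficients, and that no off-by-one error creeps into the floor bounds on $k$.

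Finally I would close the induction by checking that the closed form $c(n,k)=4^k\binom{n+1}{2k}$ satisfies this recurrence together with the initial data. This is a routine binomial verification: substituting the proposed form reduces both sides to an identity among binomial coefficients of the type $\binom{n+2}{2k}$ expressed via $\binom{n+1}{2k}$ and $\binom{n+1}{2k-2}$, which follows from Pascal's rule applied twice. The boundary terms (where $k=0$ or $k$ hits its maximal value) require separate inspection to ensure the convention that $c(n,k)=0$ outside the stated range is respected, but these are direct. I would remark that the structure here closely parallels the derivation of $\widetilde{b}(n,k)$ in the proof of Theorem~\ref{mthm}, so the argument can be presented compactly by appealing to that template.
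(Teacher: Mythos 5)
Your overall strategy --- writing $D^n(uv)=n!\sum_k c(n,k)\,u^{n+1+2k}v^{n+1-2k}$, extracting a two-term recurrence for $c(n,k)$ from the grammar, and checking that $4^k\binom{n+1}{2k}$ satisfies it --- is exactly the template the paper intends: Proposition~\ref{pro-Dnuv} is stated without an explicit proof, but the surrounding results (Theorem~\ref{mthm}, Theorem~\ref{thm-motzkin}) are proved by precisely this recurrence-comparison argument, so in method you are aligned with the paper.

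However, the recurrence you wrote down is wrong, and the error traces to having swapped the roles of the two substitution rules. Under $G_2$ one has
\[
D(u^av^b)=a\,u^{a+1}v^{b+1}+4b\,u^{a+3}v^{b-1},
\]
so it is the rule $u\rightarrow u^2v$ (differentiating the $u$'s) that preserves the index $k$, sending $u^{(n+1)+2k}v^{(n+1)-2k}$ to $(n+1+2k)\,u^{(n+2)+2k}v^{(n+2)-2k}$, while the rule $v\rightarrow 4u^3$ (differentiating the $v$'s) shifts index $k-1$ to index $k$, with coefficient $4\bigl(n+1-2(k-1)\bigr)=4(n+3-2k)$. The correct recurrence is therefore
\[
(n+1)\,c(n+1,k)=(n+1+2k)\,c(n,k)+4(n+3-2k)\,c(n,k-1),
\]
not $4(n+2-2k)$ as in your proposal. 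This is not cosmetic: with your coefficient the induction fails already at $n=2$, $k=1$, since your recurrence yields $c(2,1)=10$, whereas $D^2(uv)=2u^3v^3+24u^5v$ forces $c(2,1)=12$ (in agreement with $4\binom{3}{2}=12$). With the corrected coefficient the closing step does work: the powers of $4$ cancel and the claim reduces to
\[
(n+1)\binom{n+2}{2k}=(n+1+2k)\binom{n+1}{2k}+(n+3-2k)\binom{n+1}{2k-2},
\]
which follows from Pascal's rule $\binom{n+2}{2k}=\binom{n+1}{2k}+\binom{n+1}{2k-1}$ together with the absorption identities $2k\binom{n+1}{2k}=(n+2-2k)\binom{n+1}{2k-1}$ and $(n+3-2k)\binom{n+1}{2k-2}=(2k-1)\binom{n+1}{2k-1}$. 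Since you explicitly flagged the recurrence as provisional (``or something structurally identical''), this is a repairable bookkeeping error rather than a conceptual one, but as written the verification step of your proof would not go through.
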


Let $T_n(x)$ and $U_n(x)$ be the {\it Chebyshev polynomials of the first and second kind} of order $n$, respectively.
We can now conclude the following result, which is based on Proposition~\ref{pro-Dnuv}.
The proof runs along the same lines as that of Theorem~\ref{mthm}.
\begin{theorem}\label{Chebyshev}
If $G=\{u\rightarrow u^2v,v\rightarrow u^3\}$, then
\begin{equation*}
\begin{split}
D^{n}(uv)&=n!\sum_{k=0}^{\lrf{(n+1)/2}}\binom{n+1}{2k}u^{n+1+2k}v^{n+1-2k},\\
D^{n}(u^2)&=n!\sum_{k=0}^{\lrf{n/2}}\binom{n+1}{2k+1}u^{n+2+2k}v^{n-2k}.
\end{split}
\end{equation*}
In particular,
$$D^n(uv)\mid_{u^2=x^2-1,v=x}=n!(x^2-1)^{\frac{n+1}{2}}T_{n+1}(x),$$
$$D^n(u^2)\mid_{u^2=x^2-1,v=x}=n!(x^2-1)^{\frac{n+2}{2}}U_n(x).$$
\end{theorem}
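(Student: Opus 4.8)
The plan is to prove Theorem~\ref{Chebyshev} in two stages: first the two grammatical identities for $D^n(uv)$ and $D^n(u^2)$ under $G=\{u\to u^2v,\ v\to u^3\}$, and then the specializations to the Chebyshev polynomials. For the grammatical identities I would mimic the proof of Theorem~\ref{mthm}: posit that $D^n(uv)=\sum_k c(n,k)u^{n+1+2k}v^{n+1-2k}$, apply $D$ using the product rule together with the substitution rules, and read off the resulting recurrence for $c(n,k)$. Concretely, differentiating a single term $u^{a}v^{b}$ gives $a\,u^{a-1}v^{b}\cdot u^2v + b\,u^{a}v^{b-1}\cdot u^3 = a\,u^{a+1}v^{b+1}+b\,u^{a+3}v^{b-1}$, which shifts the exponent data in exactly the manner needed to match the claimed closed form. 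One would then verify that $c(n,k)=n!\binom{n+1}{2k}$ satisfies the induced recurrence, with the analogous computation for $D^n(u^2)$ using the coefficient $n!\binom{n+1}{2k+1}$. I expect this to parallel Proposition~\ref{pro-Dnuv} almost verbatim, the only difference being that the rule $v\to u^3$ here lacks the factor $4$ present in $G_2$, which is why the binomial coefficients appear without powers of $4$.

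The second stage is the substitution $u^2=x^2-1,\ v=x$. The key observation is that the two displayed sums, once written in terms of $u^2$ and $v$, are essentially binomial expansions. After substituting, the sum for $D^n(uv)$ becomes
\begin{equation*}
n!\sum_{k=0}^{\lrf{(n+1)/2}}\binom{n+1}{2k}(x^2-1)^{k}\,x^{n+1-2k}\cdot u^{n+1},
\end{equation*}
where I have factored $u^{n+1}$ out by writing $u^{n+1+2k}=u^{n+1}(u^2)^k$ and $v^{n+1-2k}=x^{n+1-2k}$. Writing $\sqrt{x^2-1}$ for the formal square root of $u^2$, the inner sum is exactly the real part of the binomial expansion of $\left(x+\sqrt{x^2-1}\right)^{n+1}$, i.e. it collects the even-index terms. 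Hence the plan is to recall the standard identity $T_{n+1}(x)=\tfrac12\big[(x+\sqrt{x^2-1})^{n+1}+(x-\sqrt{x^2-1})^{n+1}\big]$, which extracts precisely the even powers of $\sqrt{x^2-1}$, and to recognize that my inner sum equals that even-power part. The factor $(x^2-1)^{(n+1)/2}$ in the target then accounts for the overall power of $u$ together with the normalization of the square root.

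The analogous computation handles $D^n(u^2)$: after substitution the sum collects the odd-index binomial terms, which reproduces $U_n(x)$ via the companion identity $U_n(x)\,\sqrt{x^2-1}=\tfrac12\big[(x+\sqrt{x^2-1})^{n+1}-(x-\sqrt{x^2-1})^{n+1}\big]$. Matching the powers of $u$ and $\sqrt{x^2-1}$ gives the stated formula with the $(x^2-1)^{(n+2)/2}$ prefactor. The main obstacle I anticipate is purely bookkeeping: keeping straight the fractional powers of $x^2-1$ produced by the substitution $u^2=x^2-1$, since $u^{n+1}$ and $u^{n+2}$ give half-integer powers that must be reconciled with the integer powers $(u^2)^k$ coming from the sum. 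The cleanest way around this is to treat $u=\sqrt{x^2-1}$ as a formal symbol throughout, perform the even/odd extraction, and only at the end absorb the common factor into $(x^2-1)^{(n+1)/2}$ or $(x^2-1)^{(n+2)/2}$; once the parity of the extracted terms is matched to the defining binomial formulas for $T_{n+1}$ and $U_n$, the identities fall out immediately.
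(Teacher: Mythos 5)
Your proposal is correct and follows essentially the same route as the paper, which proves the grammatical identities by the same recurrence-matching induction used for Theorem~\ref{mthm} (positing the form, applying $D$ term by term, and comparing coefficients and initial conditions). The Chebyshev specialization, which the paper leaves implicit, is handled in your proposal exactly as intended: via the standard even/odd binomial closed forms $T_{n+1}(x)=\tfrac12\bigl[(x+\sqrt{x^2-1})^{n+1}+(x-\sqrt{x^2-1})^{n+1}\bigr]$ and $U_n(x)\sqrt{x^2-1}=\tfrac12\bigl[(x+\sqrt{x^2-1})^{n+1}-(x-\sqrt{x^2-1})^{n+1}\bigr]$, and your bookkeeping of the half-integer powers of $x^2-1$ is consistent with the stated prefactors.
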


Taking $u={\sec^2(x)}$ and $v=2\tan(x)$, it is clear that $D(u)=uv$ and $D(v)=2u$.
One can easily to verify another grammatical description of the $\gamma$-vectors of the type $A$ Coxeter complex.
\begin{theorem}
If $G=\{u\rightarrow uv, v\rightarrow 2u\}$,
then
\begin{equation*}
D^{n}(u)=\sum_{k=0}^{\lrf{{n}/{2}}}a(n+1,k)u^{k+1}v^{n-2k}.
\end{equation*}
\end{theorem}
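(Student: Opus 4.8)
The plan is to prove this by induction on $n$, in exactly the manner of the proof of Theorem~\ref{mthm}: I introduce coefficients $\widetilde{a}(n,k)$ through the defining expansion
$$D^n(u)=\sum_{k=0}^{\lrf{n/2}}\widetilde{a}(n,k)u^{k+1}v^{n-2k},$$
derive a two-term recurrence for the $\widetilde{a}(n,k)$ directly from the grammar, and then match it against the known recurrence for the numbers $a(n,k)$. For the base case, $D^0(u)=u$ gives $\widetilde{a}(0,0)=1$, which agrees with $a(1,0)=1$; so the claim $\widetilde{a}(n,k)=a(n+1,k)$ holds when $n=0$.

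Next I would apply the grammar $G=\{u\rightarrow uv,\ v\rightarrow 2u\}$. Treating $D$ as a derivation and using $D(u)=uv$ and $D(v)=2u$, a single monomial differentiates as
$$D\!\left(u^{k+1}v^{n-2k}\right)=(k+1)u^{k+1}v^{n-2k+1}+2(n-2k)u^{k+2}v^{n-2k-1}.$$
Applying this term by term to the expansion of $D^n(u)$ and collecting the coefficient of $u^{j+1}v^{n+1-2j}$ in $D^{n+1}(u)$---where the first summand contributes through $k=j$ and the second through $k=j-1$---yields
$$\widetilde{a}(n+1,j)=(j+1)\widetilde{a}(n,j)+2(n-2j+2)\widetilde{a}(n,j-1).$$

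Finally I would compare with the recurrence $a(m,k)=(k+1)a(m-1,k)+(2m-4k)a(m-1,k-1)$. Specializing to $m=n+2$ and $k=j$ gives $a(n+2,j)=(j+1)a(n+1,j)+2(n+2-2j)a(n+1,j-1)$, and since $2(n-2j+2)=2(n+2-2j)$ this is precisely the recurrence above under the identification $\widetilde{a}(n,k)=a(n+1,k)$. Together with the base case, the induction is complete. The only delicate point is the index bookkeeping: the statement pairs $D^n(u)$ with $a(n+1,\cdot)$, so one must carefully track the shift $m=n+2$ when invoking the recurrence for $a$, and verify that the boundary terms at $k=0$ and $k=\lrf{n/2}$ produce no spurious contributions outside the stated summation range. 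This is routine, but it is where an error would most plausibly arise.
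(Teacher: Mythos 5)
Your proof is correct. The monomial rule $D(u^{k+1}v^{n-2k})=(k+1)u^{k+1}v^{n-2k+1}+2(n-2k)u^{k+2}v^{n-2k-1}$, the resulting recurrence $\widetilde{a}(n+1,j)=(j+1)\widetilde{a}(n,j)+2(n+2-2j)\widetilde{a}(n,j-1)$, and its agreement with the paper's recurrence $a(m,k)=(k+1)a(m-1,k)+(2m-4k)a(m-1,k-1)$ at $m=n+2$ all check out, and the boundary issue you flag is indeed vacuous: the only term that could land outside the range $j\le\lrf{(n+1)/2}$ (the one coming from $k=\lrf{n/2}$ with $n$ even) carries the factor $2(n-2k)=0$. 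Note, however, that this is not the route the paper takes for this particular theorem: it gives no inductive proof at all, but instead justifies the statement via the trigonometric realization $u=\sec^2(x)$, $v=2\tan(x)$ mentioned just before it, under which $D^n(u)=D^{n+1}(\tan x)=P_{n+1}(\tan x)$, so that the claimed expansion is the $\gamma$-expansion of the Eulerian polynomial $A_{n+1}(x)$ (Theorem~\ref{Foata}) transported through the derivative-polynomial correspondence of Theorem~\ref{thm1}. Your argument is instead a verbatim adaptation of the paper's proof of Theorem~\ref{mthm} (expand, apply the grammar, match recurrences). Both are sound: the paper's sketch is shorter because it leans on results already established for $P_n$ and $a_n(x)$, while yours is self-contained at the level of the grammar and needs only the recurrence characterization of the numbers $a(n,k)$.
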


Define
\begin{equation}\label{Motzkin}
T(n,k)=\binom{n}{k}\binom{n-k}{\lrf{\frac{n-k}{2}}}.
\end{equation}
It is well known that $T(n,k)$ is the number of paths of length $n$ with steps $U=(1,1),D=(1,-1)$ and $H=(1,0)$, starting at $(0,0)$, staying weakly above the $x$-axis (i.e. left factors of {\it Motzkin paths}) and having $k$ $H$ steps (see~\cite[A107230]{Sloane}).
It follows from~\eqref{Motzkin} that
\begin{equation}\label{Motzkin-recu}
(n+1)T(n,k)=(2n+1-k)T(n-1,k-1)+2T(n-1,k)+4(k+1)T(n-1,k+1).
\end{equation}

We end our paper by giving the following result.
\begin{theorem}\label{thm-motzkin}
If $G=\{t\rightarrow tu^2,u\rightarrow u^2v,v\rightarrow 4u^3\}$,
then
\begin{equation*}
\begin{split}
D^{n}(t^2u^2)&=(n+1)!t^2\sum_{k=0}^{n}T(n,k)u^{2n+2-k}v^{k},\\
D^{n}(t^2u)&=n!t^2\sum_{k=0}^{n}\binom{n}{k}2^{n-k}u^{2n+1-k}v^{k},
\end{split}
\end{equation*}
\end{theorem}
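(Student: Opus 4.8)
The plan is to follow the recurrence-matching strategy used in the proof of Theorem~\ref{mthm}. First I would record the action of $D$ on a general monomial: since the grammar gives $D(t)=tu^2$, $D(u)=u^2v$ and $D(v)=4u^3$, a single application of the product rule yields
\begin{equation*}
D(t^2u^av^k)=2t^2u^{a+2}v^k+a\,t^2u^{a+1}v^{k+1}+4k\,t^2u^{a+3}v^{k-1},
\end{equation*}
which is the only computational ingredient needed for both identities.

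For the first formula I would define coefficients $c(n,k)$ by writing $D^n(t^2u^2)=t^2\sum_{k}c(n,k)u^{2n+2-k}v^k$, so that $c(0,0)=1$. Applying the displayed rule with $a=2n+2-k$ and collecting the three contributions to the monomial $u^{2(n+1)+2-k}v^k$ gives the forward recurrence
\begin{equation*}
c(n+1,k)=2c(n,k)+(2n+3-k)c(n,k-1)+4(k+1)c(n,k+1).
\end{equation*}
It then remains to verify that $c(n,k)=(n+1)!\,T(n,k)$ satisfies this recurrence. Substituting the proposed form and cancelling the factor $(n+1)!$, the identity to be checked is exactly~\eqref{Motzkin-recu} with $n$ replaced by $n+1$; the extra factor $(n+2)$ produced by passing from $(n+1)!$ to $(n+2)!$ on the left is precisely the weight of $T(n+1,k)$ in that recurrence, so the two sides agree.

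The second formula is handled identically. Writing $D^n(t^2u)=t^2\sum_{k}d(n,k)u^{2n+1-k}v^k$ and using the product rule with $a=2n+1-k$ gives
\begin{equation*}
d(n+1,k)=2d(n,k)+(2n+2-k)d(n,k-1)+4(k+1)d(n,k+1).
\end{equation*}
Substituting $d(n,k)=n!\,\binom{n}{k}2^{n-k}$ and cancelling the common factor $n!\,2^{n-k+1}$ reduces the claim to the purely binomial identity
\begin{equation*}
(n+1)\binom{n+1}{k}=\binom{n}{k}+(2n+2-k)\binom{n}{k-1}+(k+1)\binom{n}{k+1},
\end{equation*}
which follows from Pascal's rule together with the absorption identities $(k+1)\binom{n}{k+1}=(n-k)\binom{n}{k}$ and $k\binom{n}{k}=(n+1-k)\binom{n}{k-1}$.

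The routine parts are the product-rule computation and the reindexing that produces the two recurrences. The main obstacle is the bookkeeping of the factorial prefactor in the first identity: the forward recurrence for $c(n,k)$ carries the plain constant $2$ on its diagonal term, whereas~\eqref{Motzkin-recu} is weighted by $(n+1)$ on the left, so one must track carefully how the shift $(n+1)!\mapsto(n+2)!$ supplies exactly the missing factor and how the index substitution $n\mapsto n+1$ aligns the coefficient $(2n+1-k)$ in~\eqref{Motzkin-recu} with the coefficient $(2n+3-k)$ appearing in the recurrence for $c$. Once those indices are matched, both verifications become immediate.
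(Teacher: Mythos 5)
Your proposal is correct and takes essentially the same route as the paper: both derive the forward recurrence for the coefficients of $D^{n}(t^2u^2)$ from the grammar's product rule and match it against \eqref{Motzkin-recu} with $n$ replaced by $n+1$, the only cosmetic difference being that you keep the factorial inside your coefficients $c(n,k)$ while the paper factors $(n+1)!$ out from the start. You also carry out the second identity explicitly (reducing it to a binomial identity), a step the paper omits as ``similar,'' and your reduction there is sound.
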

\begin{proof}
We only prove the assertion for $D^{n}(t^2u^2)$ and the corresponding assertion for $D^{n}(t^2u)$ can be proved in a similar way.
It is easy to verify that $D(t^2u^2)=2t^2(u^4+u^3v)$ and $D^2(t^2u^2)=3!t^2(2u^6+2u^5v+u^4v^2)$.
For $n\geq 0$, we define
\begin{equation*}\label{tnk}
D^n(t^2u^2)=(n+1)!t^2\sum_{k=0}^n\widetilde{T}(n,k)u^{2n+2-k}v^{k}.
\end{equation*}
Note that
\begin{equation*}
\begin{split}
\frac{D^{n+1}(t^2u^2)}{(n+1)!t^2}&=\sum_k(2n+2-k)\widetilde{T}(n,k)u^{2n+3-k}v^{k+1}+\\
&2\sum_k \widetilde{T}(n,k)u^{2n+4-k}v^{k}+4\sum_k k\widetilde{T}(n,k)u^{2n+5-k}v^{k-1}.
\end{split}
\end{equation*}
Thus, we get
$$(n+2)\widetilde{T}(n+1,k)=(2n+3-k)\widetilde{T}(n,k-1)+2\widetilde{T}(n,k)+4(k+1)\widetilde{T}(n,k+1).$$
Comparing with~\eqref{Motzkin-recu}, we see that
the coefficients $\widetilde{T}(n,k)$ satisfy the same recurrence relation and initial conditions as $T(n,k)$, so they agree.
\end{proof}

It should be noted that the numbers $\binom{n}{k}2^{n-k}$ are elements of
the $f$-vector for the $n$-dimensional cubes (see~\cite[A038207]{Sloane})



\begin{thebibliography}{26}


\bibitem{Boyadzhiev07}
K.N. Boyadzhiev, Derivative Polynomials for tanh, tan, sech and sec in Explicit Form, Fibonacci Quart. 45 (2007) 291--303.


\bibitem{Chen93}
W.Y.C. Chen, Context-free grammars, differential operators and formal
power series, Theoret. Comput. Sci. 117 (1993) 113--129.

\bibitem{Chen121}
W.Y.C. Chen, R.X.J. Hao and H.R.L. Yang, Context-free Grammars and Multivariate Stable Polynomials over Stirling Permutations, \arxiv{1208.1420v2}.

\bibitem{Chow06}
C.-O. Chow, Counting involutory, unimodal, and alternating signed permutations, Discrete Math. 36 (2006)
2222--2228.

\bibitem{Chow08}
C.-O. Chow, On certain combinatorial expansions of the Eulerian polynomials, Adv. in Appl. Math. 41 (2008) 133--157.


\bibitem{DilksPetersen09}
K. Dilks, T.K. Petersen, J.R. Stembridge, Affine descents and the Steinberg torus, Adv. in
Appl. Math. 42 (2009) 423--444.

\bibitem{Foata70}
D. Foata and M. P. Sch\"utzenberger, Th\'eorie g\'eometrique des polyn\^omes eul\'eriens, Lecture Notes in Math. vol. 138, Springer, Berlin, 1970


\bibitem{Franssens07}
G.R. Franssens, Functions with derivatives given by polynomials in the function itself or a related function, Anal. Math. 33 (2007) 17--36.

\bibitem{Gal05}
S.R. Gal, Real root conjecture fails for five and higher-dimensional spheres, Discrete Comput. Geom. 34 (2005)
269--284.

\bibitem{Hoffman95}
M.E. Hoffman, Derivative polynomials for tangent and secant, Amer. Math.
Monthly 102 (1995) 23--30.

\bibitem{Hoffman99}
M.E. Hoffman, Derivative polynomials, Euler polynomials, and associated integer
sequences, Electron. J. Combin. 6 (1999) \#R21.



\bibitem{Ma12}
S.-M. Ma, Derivative polynomials and enumeration of permutations by number of interior and left peaks, Discrete Math. 312 (2012) 405--412.

\bibitem{Ma122}
S.-M. Ma, An explicit formula for the number of permutations with
a given number of alternating runs, J. Combin. Theory Ser.
A 119 (2012), 1660--1664.

\bibitem{Ma1301}
S.-M. Ma, A family of two-variable derivative polynomials for tangent and secant,
Electron. J. Combin. 20(1) (2013), \#P11.

\bibitem{Ma1302}
S.-M. Ma, Some combinatorial arrays generated by context-free grammars, European J. Combin. 34 (2013) 1081--1091.

\bibitem{Mansour09}
T. Mansour, Y. Sun, Identities involving Narayana polynomials and Catalan numbers, Discrete Math. 309 (2009) 4079--4088.

\bibitem{Marberg12}
E. Marberg, Actions and identities on set partitions, Electron. J. Combin. 19 (2012), \#P28.

\bibitem{NevoPetersen11}
E. Nevo, T.K. Petersen, On $\gamma$-Vectors Satisfying the Kruskal-Katona
Inequalities, Discrete Comput. Geom. 45 (2011) 503--521.

\bibitem{Petersen07}
T.K. Petersen, Enriched P-partitions and peak algebras, Adv. Math. 209 (2007) 561--610.

\bibitem{Postnikov06}
A. Postnikov, V. Reiner, L. Williams, Faces of generalized permutohedra, Documenta Math. 13 (2008) 207--273.

\bibitem{Shapiro83}
L.W. Shapiro, W.J. Woan, S. Getu, Runs, slides and moments, SIAM J. Algebraic Discrete Methods
4 (1983) 459--466.


\bibitem{Simion03}
R. Simion, A type-$B$ associahedron, Adv. in Appl. Math. 30 (2003) 2--25.

\bibitem{Sloane}
N.J.A. Sloane, The On-Line Encyclopedia of Integer Sequences,
http://oeis.org.

\bibitem{Sta10}
R.P. Stanley, \newblock A Survey of Alternating Permutations, in \newblock {\em Combinatorics
and Graphs}, R. A. Brualdi \newblock {\em et. al.} (eds.), Contemp. Math., Vol. 531,
\newblock {\em Amer. Math. Soc.}, Providence, RI, 2010, pp.~165--196.


\end{thebibliography}
\end{document}